  \newtheorem{theorem}{Theorem}[section]
  \newtheorem{definition}{Definition}[section]
  \newtheorem{proposition}[theorem]{Proposition}
  \newenvironment{proof}[1][Proof]{\par\noindent\textbf{#1.}}{}
  \newcommand{\R}{\mathbb{R}}
  \newcommand{\N}{\mathbb{N}}
  \newcommand{\E}{\mathcal{E}}
  \newcommand{\im}{{\rm im}}
  \newcommand{\codim}{{\rm codim}}
  \title{Bifurcation of equilibrium forms of an elastic rod
  on a two-parameter Winkler foundation}
  \author{Marek Izydorek, Joanna Janczewska, Nils Waterstraat\\
  \& Anita Zgorzelska}
\begin{document}
  
    \maketitle
    
    \begin{abstract}
      We consider two-parameter bifurcation of equilibrium states of an elastic rod
      on a deformable foundation. Our main theorem shows that bifurcation occurs
      if and only if the linearization of our problem has nontrivial solutions.
      In fact our proof, based on the concept of the Brouwer degree, gives more,
      namely that from each bifurcation point there branches off a con\-ti\-nu\-um
      of solutions.
    \end{abstract}
    
    \textbf{key words:} Bifurcation, buckling, Winkler foundation.
    
    \textbf{AMS Subject Classification:} Primary 58E07; Secondary 47J15, 74G60.
    
    \textbf{running head:} Bifurcation of equilibrium forms.
             
    
    \section{Introduction}
    
    Bifurcation theory is one of the most powerful tools in studying
    deformations of elastic beams, plates and shells.
    Numerous works have been devoted to the study of bifurcation
    in elasticity theory (see for instance \cite{ChL}, \cite{Red}
    and the references therein).
    
    A familiar example from beam theory is the problem of stability
    of an isotropic elastic rod lying on a deformable foundation
    which is being compressed by forces at the ends (see Fig. \ref{rys1}).    
    For small forces the rod maintains its shape, however,
    as the forces increase they reach a first critical value
    beyond which the rod may buckle.
    
    In this work, we consider mixed boundary conditions which are as follows.
    The beam is free at the left end, and so it may move as in figure \ref{rys2} below.
    However, we require the shear force at the left end to vanish.
    At the right end, we assume the beam to be simply supported. 
        
    \begin{figure}
      \includegraphics{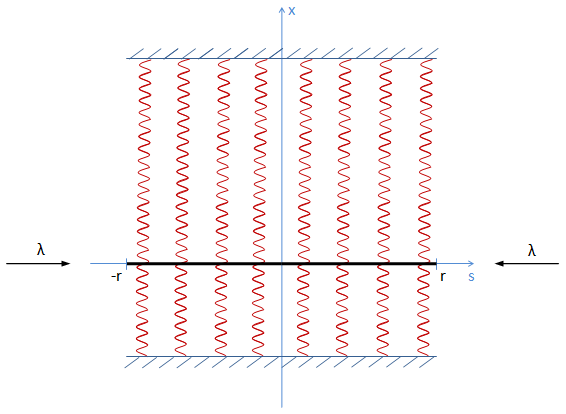}
      \caption{An elastic beam on an elastic foundation}\label{rys1}
    \end{figure}
    
    As we will show later, equilibrium forms of the rod
    under these boundary conditions satisfy the boundary value problem
    
    \begin{equation}\label{BE}
    \begin{cases}
      x^{(4)}+\alpha x^{''}+\beta x-f(x,x^{'},\ldots,x^{(4)})=0,
       & \textrm{in}\ [-r,r],\\
      x^{'}(-r)=x^{'''}(-r)=0, \\
      x(r)=x^{''}(r)=0,
    \end{cases}
    \end{equation}    
    where $\alpha$ is a parameter of the compressive force, $\beta$ is a parameter
    of the elastic foundation, and $f$ is a nonlinear term which we define
    in \eqref{nonlinearity} below.
    It follows from the definition of $f$ that for small forces the only solution
    of \eqref{BE} is the trivial one, i.e. $x_{0}(s)=0$, $s\in[-r,r]$,
    which corresponds to the straight rod in our bifurcation model.
    
    \begin{figure}
      \includegraphics{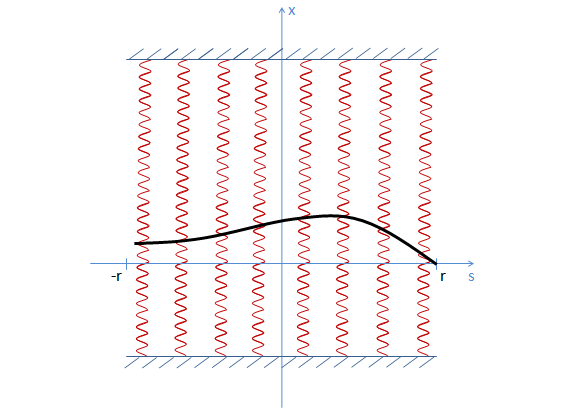}
      \caption{A buckling of an elastic beam}\label{rys2}
    \end{figure}
    
    However, as the forces increase the rod may buckle and it is desirable
    to know for which positive parameter values $(\alpha,\beta)$ this might happen.
    
    In order to answer this question, we associate with \eqref{BE}
    the linear boundary value problem
    
    \begin{equation}\label{LBE}
    \begin{cases}
      x^{(4)}+\alpha x^{''}+\beta x=0,
       & \textrm{in}\ [-r,r],\\
      x^{'}(-r)=x^{'''}(-r)=0, \\
      x(r)=x^{''}(r)=0
    \end{cases}
    \end{equation}
    and we denote by $N(\alpha,\beta)$ its space of solutions.
    
    The main theorem of this paper shows that a necessary and sufficient condition
    for bifurcation, and so for the possibility of a buckling of the rod,
    is that $\dim N(\alpha,\beta)\neq 0$.
    
    Let us point out that a similar model was investigated by A.\ Borisovich,
    Yu.\ Morozov and Cz.\ Szymczak in \cite{BMSz}, where the authors assumed
    that the rod is simply supported at both ends. They proved the existence
    of simple bifurcation points (meaning that $\dim N(\alpha,\beta)=1$)
    by applying a variational version of the Crandall-Rabinowitz theorem
    (compare Thm. \ref{CraRab} below).
    Later, in \cite{BoD}, A.\ Borisovich and J.\ Dymkowska showed
    a corresponding result under our boundary conditions,
    however, to the best of our knowledge
    the existence of multiple bifurcation points
    in the solution set of \eqref{BE}	is new. Note that here we prove even more,
    namely the existence of multiple branching points.
    
    Finally, let us mention that other models for buckling are described
    for example in \cite{AmP, BoB1, BoB2, BBO, BChT, CwR, Red}.
    
    Our paper is composed of three sections. In Section \ref{mathmod}
    we derive the equation of equilibrium forms of the rod and state our main theorem.
    Section \ref{proof} is devoted to the proof of this result.
    
    \subsubsection*{Acknowledgments.}
    
    Our research was supported by the Grant PPP-PL no.\ 57217076
    of the De\-uts\-cher Akademischer Austauschdienst - DAAD
    and the Ministry of Science and Higher Education of Poland - MNiSW.
    
    The authors wish to express their thanks
    to Professor Czes{\l}aw Szymczak from the Faculty of Ocean Engineering
    and Ship Technology of Gda\'{n}sk University of Technology
    for several helpful comments concerning the model.
    
    The authors are greatly indebted to Professor J\'{o}zef E.\ Sienkiewicz,
    the physicist from the Faculty of Applied Physics and Mathematics
    of Gda\'{n}sk University of Tech\-no\-lo\-gy, for pointing out
    a mistake in the formula for $E_2$ in \cite{BoD}.
    
    Our special thanks go to a student of mathematics at Gda\'{n}sk
    University of Technology, Aleksander Rogi\'{n}ski, for the preparation
    of pictures for the article.

    
    \section{Mathematical model}\label{mathmod}
    
    In this section we derive the equation \eqref{BE} of equilibrium forms
    of the rod by a variational approach along the lines of \cite{BoD}.
     The following formulas for $E_1$ and $E_3$ are as in \cite{BoD},
    but as a result of conversations with J.E.\ Sienkiewicz and Cz.\ Szymczak,
    the formula for $E_2$ has been improved.
    The authors of \cite{BoD} assumed that the rod
    under the action of the compressing force became longer,
    and so their assumption does not agree with experiments.
    Our refinement leads to a different nonlinear term in the equation \eqref{BE},
    however the system \eqref{LBE}, obtained by linearizing \eqref{BE},
    is not changed.
    
    Due to the fact that the work of A.\ Borisovich and J.\ Dymkowska contains
    a mistake, and moreover, it appeared only in Polish and in a limited number of copies,
    we do not restrict the discussion to explain the improvement,
    but for the convenience of the reader we provide a detailed exposition
    of the mathematical model.
     
    The total potential energy $E_t$ of the system composed of the rod
    and the foundation is equal to:
    
    \begin{displaymath}
      E_t=E_1-E_2+E_3,
    \end{displaymath}
    where
    
    \begin{itemize}
      \item $E_1$ is the energy of the compressed rod,
      \item $E_2$ is the work of the compressing force,
      \item $E_3$ is the energy of the Winkler foundation (i.e. of the springs).
    \end{itemize}
    
    The energy $E_1$ is given by
    
    \begin{displaymath}
      E_1(x)=\E I \int\limits^{r}_{-r}{\frac{\kappa^{2}(s)}{2}}ds,
    \end{displaymath}
    where
    
    \begin{displaymath}
      \kappa(s)=\frac{x''(s)}{\left(1+x'(s)^{2}\right)^{\frac{3}{2}}}
    \end{displaymath}
    is the curvature of the rod at a point $s\in[-r,r]$,
    $\E$ is Young's modulus and $I$ is the moment of inertia
    of the cross section of the rod.
    The second energy $E_2$ is defined as
    
    \begin{displaymath}
      E_2(x,\lambda)=
      \lambda\int\limits_{-r}^{r}\left(1-\sqrt{1-x'(s)^{2}}\right)ds,
    \end{displaymath}
    where
    
    \begin{displaymath}
      \int\limits_{-r}^{r}\left(1-\sqrt{1-x'(s)^{2}}\right)ds
    \end{displaymath}
    is the horizontal displacement of the left end of the rod
    and $\lambda>0$ is the value of the compressing force.
    Finally, the energy $E_3$ is defined by
    
    \begin{displaymath}
      E_3(x,\mu,\nu)=\int\limits^{r}_{-r}{U(x(s),\mu,\nu)ds},
    \end{displaymath}
		where
		
		\begin{displaymath}
		  U(x,\mu,\nu)=\frac{1}{2}\mu x^2-\frac{1}{4}\nu x^4 + o(x^4)
		\end{displaymath}
		is determined experimentally, and $\mu>0$ and $\nu>0$ are parameters
		of the elastic foundation.
		
		Expanding $(1+x)^{-3}$ and $\sqrt{1+x}$ as Maclaurin series,
		we get
		
		\begin{equation}\label{Mac1}
		  \frac{1}{\left(1+x\right)^3}=1-3x+6x^2-10x^3+o(x^3)
		\end{equation}
		and
		
		\begin{equation}\label{Mac2}
		  \sqrt{1+x}=1+\frac{x}{2}-\frac{x^2}{8}+\frac{x^3}{16}+o(x^3),
		\end{equation}
		respectively. If we omit the terms of order higher than $4$, we obtain
		
		\begin{displaymath}
		  E_1(x)\approx \E I \int_{-r}^{r}\left(\frac{1}{2}x''(s)^{2}
		  -\frac{3}{2}x'(s)^{2} x''(s)^{2}\right)ds
		\end{displaymath}
		and
		
		\begin{displaymath}
		  E_2(x,\lambda)\approx\lambda\int_{-r}^{r}\left(\frac{1}{2}x'(s)^{2}
		  +\frac{1}{8}x'(s)^{4}\right)ds.
		\end{displaymath}
		Hence the approximative formula for the total potential energy
		has the form
		
		\begin{align}\label{totalenergy}
		\begin{split}
		  E_t(x,\lambda,\mu,\nu)
		  \approx &\, \E I \int_{-r}^{r}\left(\frac{1}{2}x''(s)^{2}
		  -\frac{3}{2}x'(s)^{2} x''(s)^{2}\right)ds \\
		  & -\lambda\int_{-r}^{r}\left(\frac{1}{2}x'(s)^{2}
		  +\frac{1}{8}x'(s)^{4}\right)ds \\
		  & +\int_{-r}^{r}\left(\frac{1}{2}\mu x(s)^{2}
		  -\frac{1}{4}\nu x(s)^{4}\right)ds.
		\end{split}
		\end{align}
		
		We now define
		
		\begin{displaymath}
		  X=\{x\in C^4[-r,r]\colon x'(-r)=x'''(-r)=0,\ \ x(r)=x''(r)=0\}
		\end{displaymath}
		which is a Banach space with respect to the standard norm
		
		\begin{displaymath}
		  \|x\|_X=\sum^{4}_{k=0}{\max_{s\in [-r,r]}{|x^{(k)}(s)|}}.
		\end{displaymath}
		Note that the boundary conditions in the definition of $X$
		describe the behaviour of the rod at its ends (see Fig.\ \ref{rys2}).
		
		Setting
		
		\begin{displaymath}
		  \alpha=\frac{\lambda}{\E I},\ \ \beta=\frac{\mu}{\E I},\ \ \gamma=\frac{\nu}{\E I}.
		\end{displaymath}
		and dividing the formula \eqref{totalenergy} by $2r\E I$, we obtain
		a functional $E\colon X\times\R_{+}^{3}\to\R$ defined by
		
		\begin{align}\label{potential}
		\begin{split}
		  E(x,\alpha,\beta,\gamma)=
		  & \frac{1}{4r}\int_{-r}^{r}\left(x''(s)^{2}-3x'(s)^{2} x''(s)^{2}\right)ds\\
		  & -\frac{1}{4r}\int_{-r}^{r}\left(\alpha x'(s)^{2}+\frac{\alpha}{4}x'(s)^{4}\right)ds\\
		  & +\frac{1}{4r}\int_{-r}^{r}\left(\beta x(s)^{2}-\frac{\gamma}{2}x(s)^{4}\right)ds.
		\end{split}
		\end{align}
		In what follows we refer to $E$ as \textit{the energy functional},
		and we note for later reference that its derivative with respect
		to the space variable $x$ is
		
		\begin{align}\label{Euler}
		\begin{split}
		  E_{x}'(x,\alpha,\beta,\gamma)h=
		  & \frac{1}{2r}\int_{-r}^{r}\left(\beta x(s)-\gamma x(s)^{3}\right)h(s)ds\\
		  & -\frac{1}{2r}\int_{-r}^{r}\left(\alpha x'(s)+\frac{\alpha}{2}x'(s)^{3}
		  +3x'(s) x''(s)^{2}\right)h'(s)ds\\
		  & +\frac{1}{2r}\int_{-r}^{r}\left(x''(s)-3x'(s)^{2} x''(s)\right)h''(s)ds
		\end{split}
		\end{align}
		for all $x,h\in X$ and $\alpha,\beta,\gamma\in\R_{+}$.		
		Let us now denote by $Y$ the space $C[-r,r]$ with the standard norm
		
		\begin{displaymath}
		  \|y\|_{Y}=\max_{s\in[-r,r]}{|y(s)|},
		\end{displaymath}
		and let us consider the map $F\colon X\times\R_{+}^{3}\to Y$ defined by
		
		\begin{align}
		\begin{split}\label{map}
		  F(x,\alpha,\beta,\gamma)=\,
		  & x^{(4)}+\alpha x''+\beta x\\
		  & -\gamma x^3-3x''^{3}-12x'x''x'''\\
		  & -3x'^{2}\left(x^{(4)}-\frac{\alpha}{2}x''\right).
		\end{split}
		\end{align}
		
		If we set
		
		\begin{equation}\label{nonlinearity}
		  f(x,x',\ldots,x^{(4)})=\gamma x^3+3x''^{3}+12x'x''x'''
		  +3x'^{2}\left(x^{(4)}-\frac{\alpha}{2}x''\right)
		\end{equation}
		for each $x\in X$, then the operator equation
		
		\begin{equation}\label{OE}
		  F(x,\alpha,\beta,\gamma)=0
		\end{equation}
		is equivalent to our previously introduced boundary value problem \eqref{BE}.
		Clearly, the trivial function $x_0\equiv 0$ satisfies the equation \eqref{OE}
		for all values of parameters $\alpha$, $\beta$ and $\gamma$.
		We call the set $\Gamma\subset X\times\R_{+}^{3}$	given by
		
		\begin{displaymath}
		  \Gamma=\{(0,\alpha,\beta,\gamma)\colon \alpha,\beta,\gamma\in\R_{+}\}
		\end{displaymath}
		\textit{the trivial family} of solutions of the equation \eqref{OE}.
		Naturally, a solution of \eqref{OE} is said to be \textit{nontrivial}
		if it does not belong to $\Gamma$.
		
	 An interesting phenomenon is when there is a ''branching''
		of the equation \eqref{OE} in correspondence with some value
		of the multiparameter $(\alpha,\beta,\gamma)$. This is the object
		of bifurcation theory.
		
		\begin{definition}
		   A point $(0,\alpha_0,\beta_0,\gamma_0)\in\Gamma$ is called
		  a bifurcation point of \eqref{OE} if in every neighbourhood
		  of it in $X\times\R^{3}_{+}$ there is a nontrivial solution
		  of \eqref{OE}, in other words, $(0,\alpha_0,\beta_0,\gamma_0)$ belongs
		  to the closure in $X\times\R_{+}^{3}$ of the set of nontrivial
		  solutions of the equation \eqref{OE}.\\
		  In particular, a bifurcation point $(0,\alpha_0,\beta_0,\gamma_0)\in\Gamma$
		  of the equation \eqref{OE} is said to be a branching
		  point if there is a continuum (namely a closed connected set) of nontrivial
		  solutions of \eqref{OE} which contains $(0,\alpha_0,\beta_0,\gamma_0)$.
		\end{definition}
		
		Integrating by parts in \eqref{Euler}, we have
		
		\begin{align}
		\begin{split}\label{Lagrange}
		  E_{x}'(x,\alpha,\beta,\gamma)h=\,
		  & \frac{1}{2r}\int_{-r}^{r}\left(x^{(4)}(s)+\alpha x''(s)+\beta x(s)\right)h(s)ds\\
		  & -\frac{1}{2r}\int_{-r}^{r}\left(\gamma x(s)^{3}+3x''(s)^{3}+12x'(s)x''(s)x'''(s)\right)h(s)ds\\
		  & -\frac{1}{2r}\int_{-r}^{r}3x'(s)^{2}\left(x^{(4)}(s)-\frac{\alpha}{2}x''(s)\right)h(s)ds.
		\end{split}
		\end{align}
		
		If we denote by $\langle\cdot,\cdot\rangle$ the standard inner product
		in $L^{2}(-r,r)$, i.e.
		
		\begin{displaymath}
		  \langle g,h \rangle=\frac{1}{2r}\int_{-r}^{r}g(s)h(s)ds, \quad g,h\in L^{2}(-r,r),
		\end{displaymath}
		then
		
		\begin{equation}\label{vargrad}
		  E_{x}'(x,\alpha,\beta,\gamma)h=
		  \left\langle F(x,\alpha,\beta,\gamma),h\right\rangle
		\end{equation}
		for all $x,h\in X$ and $\alpha,\beta,\gamma\in\R_{+}$.
		Therefore, we call $F$ \textit{the variational gradient} of $E$,
		and we see from \eqref{vargrad} that solutions of \eqref{OE}
		are critical points of \eqref{potential}.
		
		Differentiating the map $F$ with respect to the space variable $x$
		at $x_0\equiv 0$ we get
		
		\begin{equation}\label{derivative}
		  F_{x}'(0,\alpha,\beta,\gamma)h=h^{(4)}+\alpha h''+\beta h
		\end{equation}
		for every $h\in X$ and $\alpha,\beta,\gamma\in\R_{+}$,
		and so
		
		\begin{displaymath}
		  N(\alpha,\beta)=\ker F_{x}'(0,\alpha,\beta,\gamma).
		\end{displaymath}
		
		We can now state the main result of this paper.
		
		\begin{theorem}\label{mainthm}
		  A point $(0,\alpha_0,\beta_0,\gamma_0)\in\Gamma$ is a branching point
		  of the equation \eqref{OE} if and only if $\dim N(\alpha_0,\beta_0)\neq 0$.
		\end{theorem}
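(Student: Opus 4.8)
The plan is to establish the two implications separately. The necessity of the condition will follow at once from the implicit function theorem, while the sufficiency will be obtained from a global bifurcation argument that exploits the oddness $F(-x,\alpha,\beta,\gamma)=-F(x,\alpha,\beta,\gamma)$ of the map \eqref{map}.

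For the necessity, suppose $\dim N(\alpha_0,\beta_0)=0$. By \eqref{derivative} the operator $F_x'(0,\alpha_0,\beta_0,\gamma_0)\colon X\to Y$ sends $h$ to $h^{(4)}+\alpha_0 h''+\beta_0 h$, subject to the boundary conditions of \eqref{BE}; this is a regular fourth order boundary value problem, hence a Fredholm operator of index zero, and since its kernel $N(\alpha_0,\beta_0)$ is trivial it is in fact an isomorphism. As $F$ is smooth (indeed polynomial) jointly in all its arguments near $(0,\alpha_0,\beta_0,\gamma_0)$, the implicit function theorem shows that in a neighbourhood of this point the solution set of \eqref{OE} is the graph of a map from the parameters to $X$; since $\Gamma$ already is such a graph, there is no nontrivial solution nearby, so $(0,\alpha_0,\beta_0,\gamma_0)$ is not a bifurcation point and, a fortiori, not a branching point.

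For the sufficiency, assume $m:=\dim N(\alpha_0,\beta_0)\ge 1$. I would keep $\alpha=\alpha_0$ and $\gamma=\gamma_0$ fixed, so that $\beta$ is the only varying parameter, and recast \eqref{OE} as a fixed point equation: on a small ball about $0$ in $X$ the factor $1-3x'(s)^2$ is positive, so one may solve \eqref{OE} for $x^{(4)}$ and apply the bounded inverse $\mathcal K\colon Y\to X$ of the operator $h\mapsto h^{(4)}$ taken under the boundary conditions of \eqref{BE}, which rewrites \eqref{OE} as $x=\Psi(x,\alpha_0,\beta,\gamma_0)$. Since the right hand side involves only the derivatives of $x$ up to order three, and $C^{4}[-r,r]$ embeds compactly into $C^{3}[-r,r]$, the map $\Psi(\cdot,\alpha_0,\beta,\gamma_0)$ is completely continuous, and it is odd in $x$ because $f$ in \eqref{nonlinearity} is. A direct computation identifies $I-\Psi_x'(0,\alpha_0,\beta,\gamma_0)$ with $\mathcal K\circ F_x'(0,\alpha_0,\beta,\gamma_0)$, whose kernel is $N(\alpha_0,\beta)$; hence this operator fails to be invertible precisely because $m\ge 1$. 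I would then invoke the global bifurcation theorem for odd completely continuous vector fields, whose proof rests on the Leray--Schauder degree and ultimately on Borsuk's theorem for the Brouwer degree: from $(0,\beta_0)$ there bifurcates a continuum of nontrivial solutions of $x=\Psi(x,\alpha_0,\beta,\gamma_0)$. (Equivalently, one may first perform a Lyapunov--Schmidt reduction near $(0,\alpha_0,\beta_0,\gamma_0)$ to an odd map $\phi\colon U\subset N(\alpha_0,\beta_0)\times\R\to N(\alpha_0,\beta_0)$ with $\phi(0,\beta)=0$, and apply the finite dimensional, literally Brouwer degree, version of the same statement.) After a harmless cut-off making $\Psi$ globally defined, this continuum coincides near the origin with solutions of \eqref{OE}, and a standard localization yields a continuum of nontrivial solutions of \eqref{OE} containing $(0,\alpha_0,\beta_0,\gamma_0)$; thus this point is a branching point.

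The step I expect to be the main obstacle is that $m=\dim N(\alpha_0,\beta_0)$ may be even---and since exhibiting such higher order degeneracies for \eqref{LBE} is precisely the novelty we aim at, this case cannot be dismissed. For even $m$ the Leray--Schauder index of the trivial solution is unchanged as $\beta$ crosses $\beta_0$, so the classical Rabinowitz alternative based on a jump of the degree does not apply; it is the $\mathbb{Z}_2$-symmetry $x\mapsto-x$ of \eqref{OE}---equivalently, the fact that $F$ is the variational gradient of a functional $E$ that is even in the fibre variable---that allows one to bypass the parity-of-multiplicity hypothesis via Borsuk's theorem. A smaller technical point that must be handled carefully is that the passage to the fixed point equation genuinely produces a compact perturbation of the identity, that is, that the term $3x'^{2}x^{(4)}$ in \eqref{nonlinearity} entails no loss of derivatives once one has solved for $x^{(4)}$.
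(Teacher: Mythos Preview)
Your argument for necessity and for the odd-multiplicity case is fine and matches the paper. The gap is in the even-multiplicity case, precisely where you yourself flag the difficulty.

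You propose to freeze $\alpha=\alpha_0$ and vary $\beta$ alone, and then to invoke a ``global bifurcation theorem for odd completely continuous vector fields'' based on Borsuk's theorem. No such theorem delivers a \emph{continuum} when the crossing multiplicity is even. Borsuk only tells you that for each fixed $\beta$ the degree of the (odd) reduced map on a symmetric ball is odd; it does not force that odd number to change as $\beta$ passes through $\beta_0$. In fact it does not change here: after Lyapunov--Schmidt the $2\times 2$ Jacobian of the bifurcation map at $\xi=0$ is diagonal with entries proportional to $c_{m_k}^{2}+\alpha_0 c_{m_k}+\beta=\beta-\beta_0$ for $k=1,2$, so its determinant behaves like $(\beta-\beta_0)^2>0$ on both sides of $\beta_0$. (The same happens if you freeze $\beta$ and vary $\alpha$: both diagonal entries are $(\alpha-\alpha_0)c_{m_k}$ with $c_{m_k}<0$, hence their product is again positive.) The $\mathbb Z_2$-symmetry, or the variational structure via B\"ohme--Marino type results, would give you a \emph{bifurcation point}, but not the branching continuum that the theorem asserts.

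The paper resolves this by genuinely using the \emph{two}-parameter structure. After the same Lyapunov--Schmidt reduction one computes that the diagonal entries of $\varphi'_\xi(0,\alpha,\beta)$ are, up to a positive factor, $(\alpha-\alpha_0)c_{m_k}+(\beta-\beta_0)$. Since $c_{m_1}\neq c_{m_2}$, one can approach $(\alpha_0,\beta_0)$ along a direction whose slope $(\beta-\beta_0)/(\alpha-\alpha_0)$ lies strictly between $-c_{m_1}$ and $-c_{m_2}$, making exactly one entry negative and hence $\det<0$; along other directions $\det>0$. This sign change of the Brouwer degree in the $(\alpha,\beta)$-plane is what forces, via the Krasnosel'ski\u{\i}/Rabinowitz alternative, a continuum of nontrivial solutions through $(0,\alpha_0,\beta_0,\gamma_0)$. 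In short, the missing idea in your proposal is that a single scalar parameter cannot detect the degree jump when $\dim N=2$; one must move in the two-dimensional parameter space so that the two rays $l_{m_1}$ and $l_{m_2}$ are crossed one at a time.
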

		
		\hspace{10cm} $\Box$
		
		Our theorem extends Theorem 5.3.2 of \cite{BoD},
		which states that $\dim N(\alpha_0,\beta_0)\neq 0$ is a necessary condition
		for bifurcation in the solution set
		of the equation \eqref{OE} at a point $(0,\alpha_0,\beta_0,\gamma_0)$.
		
		It is worth pointing out that the theorem shows that the parameter $\gamma$
		has no influence on the occurrence of bifurcation.		
		
		\section{Proof of Theorem \ref{mainthm}}\label{proof}
		
		In order to prove Theorem \ref{mainthm}, we first discuss some properties
		of the nonlinear map $F$.
		
		\begin{proposition}\label{Fredholm}
		  For all values of parameters $\alpha,\beta,\gamma\in\R_{+}$
		  the linear operator $F'_{x}(0,\alpha,\beta,\gamma)\colon X\to Y$
		  is Fredholm of index zero.
		\end{proposition}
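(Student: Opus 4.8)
The plan is to split $F'_x(0,\alpha,\beta,\gamma)$ into an invertible part plus a compact part and then to invoke the stability of the Fredholm index under compact perturbations. Concretely, using \eqref{derivative} I would write $F'_x(0,\alpha,\beta,\gamma) = L_0 + K_{\alpha,\beta}$, where $L_0 \colon X \to Y$ is given by $L_0 h = h^{(4)}$ and $K_{\alpha,\beta} \colon X \to Y$ by $K_{\alpha,\beta} h = \alpha h'' + \beta h$. Since an isomorphism of Banach spaces is Fredholm of index zero, and since both the Fredholm property and the index are preserved under the addition of a compact operator, it then suffices to prove that $L_0$ is a Banach space isomorphism and that $K_{\alpha,\beta}$ is compact.

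For the first point, boundedness of $L_0$ is immediate from $\|h^{(4)}\|_Y \le \|h\|_X$. To see injectivity, observe that any $h \in \ker L_0$ is a polynomial of degree at most three, say $h(s) = a_0 + a_1 s + a_2 s^2 + a_3 s^3$; the conditions $h'''(-r) = 0$ and $h''(r) = 0$ force $a_3 = a_2 = 0$, and then $h'(-r) = 0$ and $h(r) = 0$ force $a_1 = a_0 = 0$. For surjectivity, given $y \in Y$ I would take $g \in C^4[-r,r]$ to be the fourfold iterated integral of $y$ based at $-r$, so that $g^{(4)} = y$; since the linear map sending a cubic polynomial $p$ to $\bigl(p'(-r), p'''(-r), p(r), p''(r)\bigr) \in \R^4$ is injective by the computation just made, and hence bijective, one can pick a cubic $p$ whose four values equal $\bigl(-g'(-r), -g'''(-r), -g(r), -g''(r)\bigr)$, so that $h := g + p \in X$ and $L_0 h = y$. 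Thus $L_0$ is a continuous linear bijection between Banach spaces, and the bounded inverse theorem yields that it is an isomorphism.

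It then remains to check compactness of $K_{\alpha,\beta}$, and here the key is that the norm of $X$ controls one derivative more than is needed for the target $Y = C[-r,r]$: if $B \subset X$ is bounded, then $\{h'' : h \in B\}$ and $\{h : h \in B\}$ are bounded in $C^1[-r,r]$, hence uniformly bounded and equicontinuous, so the Arzel\`{a}--Ascoli theorem makes them relatively compact in $C[-r,r]$. Consequently $h \mapsto h''$ and $h \mapsto h$ are compact from $X$ to $Y$, and so is their linear combination $K_{\alpha,\beta}$. Putting the two parts together shows that $F'_x(0,\alpha,\beta,\gamma) = L_0 + K_{\alpha,\beta}$ is Fredholm of index zero for all $\alpha,\beta,\gamma \in \R_+$. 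I expect the only step needing real care to be the invertibility of the reference operator $L_0$ — equivalently, the fact that the four prescribed boundary conditions are complementing for $h \mapsto h^{(4)}$ — though, as indicated, this reduces to the elementary observation that the corresponding $4 \times 4$ system for the coefficients of a cubic polynomial is nonsingular.
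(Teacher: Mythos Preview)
Your proof is correct and follows essentially the same strategy as the paper: split $F'_x(0,\alpha,\beta,\gamma)$ into the leading part $h\mapsto h^{(4)}$ and the lower-order part $h\mapsto\alpha h''+\beta h$, show the latter is compact via Arzel\`{a}--Ascoli, and verify that the former is Fredholm of index zero. The only difference is cosmetic: the paper observes that $h\mapsto h^{(4)}$ has index $4$ on all of $C^4[-r,r]$ and then restricts to the codimension-$4$ subspace $X$, whereas you prove directly (and slightly more, namely invertibility) by checking that the four boundary conditions are complementing for the fourth-order operator.
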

		
		\begin{proof}\,
		  The linear operator $A\colon C^4[-r,r]\rightarrow Y$, $Ah=h^{(4)}$
		  is surjective and its kernel consists of all polynomials of degree
		  at most $3$. Hence $A$ is Fredholm of index $4$.
		  As $X$ has codimension $4$ in $C^4[-r,r]$, the restriction of $A$
		  to $X$ is Fredholm of index $0$ (cf. \cite[Lemma XI.3.1]{Goldberg}).
		  Since the embeddings of $C^2[-r,r]$ and $C^4[-r,r]$ into $C[-r,r]$
		  are compact, it follows that $F'_{x}(0,\alpha,\beta,\gamma)$
		  is a compact perturbation of the restriction of $A$ to $X$
		  and so a Fredholm operator of index zero.
		\end{proof}
		
		\hspace{10cm} $\Box$
		
		The following proposition is an immediate consequence of the equality
		\eqref{vargrad}.
		
		\begin{proposition}\label{self-adjoint}
		  For all $\alpha,\beta,\gamma\in\R_{+}$
		  the map $F'_{x}(0,\alpha,\beta,\gamma)\colon X\to Y$ is self-adjoint
		  with respect to the inner product $\langle\cdot,\cdot\rangle$, i.e.
		  
		  \begin{displaymath}
		    \left\langle F'_{x}(0,\alpha,\beta,\gamma)h, g \right\rangle
		    =\left\langle h, F'_{x}(0,\alpha,\beta,\gamma)g \right\rangle
		  \end{displaymath}
		  for all $h,g\in X$.
		\end{proposition}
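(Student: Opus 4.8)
The plan is to deduce the claim from the identity \eqref{vargrad} together with the symmetry of the second derivative of the energy functional. First I would observe that, since the integrands in \eqref{potential} are polynomials in $x$, $x'$ and $x''$, the map $E(\cdot,\alpha,\beta,\gamma)\colon X\to\R$ is of class $C^2$ (in fact $C^\infty$), so that its Hessian $E''_{xx}(0,\alpha,\beta,\gamma)$ at the origin is a symmetric bilinear form on $X$.

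Next I would differentiate \eqref{vargrad} with respect to the space variable $x$ at $x_0\equiv 0$, which gives
\begin{displaymath}
  E''_{xx}(0,\alpha,\beta,\gamma)(g,h)=\left\langle F'_{x}(0,\alpha,\beta,\gamma)g,h\right\rangle
\end{displaymath}
for all $g,h\in X$. Combining this with the symmetry of the Hessian, $E''_{xx}(0,\alpha,\beta,\gamma)(g,h)=E''_{xx}(0,\alpha,\beta,\gamma)(h,g)$, and with the symmetry of the inner product $\langle\cdot,\cdot\rangle$, I obtain $\langle F'_{x}(0,\alpha,\beta,\gamma)h,g\rangle=\langle h,F'_{x}(0,\alpha,\beta,\gamma)g\rangle$ for all $h,g\in X$, which is precisely the assertion.

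Alternatively, and perhaps more in the spirit of an elementary verification, one can argue directly from the explicit formula \eqref{derivative}: writing $\langle F'_{x}(0,\alpha,\beta,\gamma)h,g\rangle=\frac{1}{2r}\int_{-r}^{r}(h^{(4)}+\alpha h''+\beta h)\,g\,ds$ and integrating by parts twice in each of the first two summands, one checks that all boundary contributions vanish because both $h$ and $g$ lie in $X$ and hence satisfy $h'(-r)=h'''(-r)=0$ and $h(r)=h''(r)=0$; what remains is $\frac{1}{2r}\int_{-r}^{r}h\,(g^{(4)}+\alpha g''+\beta g)\,ds=\langle h,F'_{x}(0,\alpha,\beta,\gamma)g\rangle$. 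The proposition is described as immediate, and indeed neither route is difficult; the only place that calls for a little care is the justification that $E$ is genuinely twice continuously differentiable as a functional on the Banach space $X$ in the first approach, or the precise tallying of the boundary terms in the second.
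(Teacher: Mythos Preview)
Your first argument is precisely what the paper intends: it states only that the proposition ``is an immediate consequence of the equality \eqref{vargrad}'', and your differentiation of \eqref{vargrad} at $x_0\equiv 0$ together with the symmetry of the Hessian $E''_{xx}$ spells this out correctly. The alternative integration-by-parts verification you sketch is also valid and is a welcome supplement, but it is not needed for the paper's purposes.
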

		
		\hspace{10cm} $\Box$
		
		We now denote by $Z$ the set of all points $(\alpha,\beta)\in\R_{+}^{2}$
		satisfying the inequality $4\beta\leq\alpha^{2}$.
		Let us consider in $Z$ the family of rays $l_m$ for $m\in\N$ given by
		
		\begin{displaymath}
		  \beta=-c_{m}\alpha-c_{m}^{2},
		\end{displaymath}
		where
		
		\begin{equation}\label{coefficients}
		  c_{m}=-\left(\frac{\pi}{r}\right)^{2}\left(\frac{2m-1}{4}\right)^{2}.
		\end{equation}
		
		\begin{theorem}[\cite{BoD}]\label{kernel}
		  For $(\alpha,\beta)\in\R_{+}^{2}$ one of the following three cases hold:
		  
		  \begin{enumerate}
			  \item[(i)] If the point $(\alpha,\beta)$ does not belong to any ray
			  $l_m$, then \[\dim N(\alpha,\beta)=0\] and the linear boundary value problem
			  \eqref{LBE} possesses only the trivial solution.
			  
			  \item[(ii)] If the point $(\alpha,\beta)$ belongs to one and only one ray
			  $l_m$, then \[\dim N(\alpha,\beta)=1\] and $N(\alpha,\beta)$
			  is generated by \[e_{m}(s)=2\cos{\sqrt{-c_m}(s+r)}.\]
			  
			  \item[(iii)] If the point $(\alpha,\beta)$ belongs to the intersection
			  of two rays $l_{m_1}$ and $l_{m_2}$ then \[\dim N(\alpha,\beta)=2\]
			  and the two linearly independent functions
			  \[e_{m_1}(s)=2\cos{\sqrt{-c_{m_1}}(s+r)}\] and \[e_{m_2}(s)=2\cos{\sqrt{-c_{m_2}}(s+r)}\]
			  are a basis of $N(\alpha,\beta)$.
      \end{enumerate}  
    \end{theorem}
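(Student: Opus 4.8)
The plan is to compute $N(\alpha,\beta)=\ker F'_{x}(0,\alpha,\beta,\gamma)$ directly by solving the linear ordinary differential equation in \eqref{LBE}, after first reducing to the region $Z$. If $h\in N(\alpha,\beta)$, then by \eqref{derivative} and the self-adjointness of Proposition \ref{self-adjoint} --- equivalently, by integrating $\langle h^{(4)}+\alpha h''+\beta h,h\rangle=0$ by parts and using $h'(-r)=h'''(-r)=h(r)=h''(r)=0$ to annihilate every boundary term --- one gets $\langle h^{(4)},h\rangle=\|h''\|^{2}$ and $\langle h'',h\rangle=-\|h'\|^{2}$, hence
\begin{displaymath}
\|h''\|^{2}+\beta\|h\|^{2}=\alpha\|h'\|^{2}\leq\alpha\,\|h''\|\,\|h\|,
\end{displaymath}
the inequality being Cauchy--Schwarz applied to $\|h'\|^{2}=-\langle h'',h\rangle$. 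Thus $\|h''\|^{2}-\alpha\,\|h''\|\,\|h\|+\beta\,\|h\|^{2}\leq 0$, which is impossible unless $h=0$ when $4\beta>\alpha^{2}$, since then $t\mapsto t^{2}-\alpha t+\beta$ is strictly positive. As every point of any ray $l_{m}$ lies in $Z$ (there $\alpha^{2}-4\beta=(\alpha+2c_{m})^{2}\geq 0$), it suffices to establish the trichotomy on $Z$.

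Next, for $(\alpha,\beta)\in Z$ with $4\beta<\alpha^{2}$ I would solve the equation explicitly. The characteristic equation $\lambda^{4}+\alpha\lambda^{2}+\beta=0$ gives $\lambda^{2}=\tfrac{1}{2}(-\alpha\pm\sqrt{\alpha^{2}-4\beta})$, two distinct negative numbers (their sum is $-\alpha<0$, their product $\beta>0$), say $-\omega_{1}^{2}$ and $-\omega_{2}^{2}$ with $0<\omega_{1}<\omega_{2}$. Substituting $t=s+r\in[0,2r]$ (the equation is autonomous, hence unchanged), the general solution is $x=a\cos\omega_{1}t+b\sin\omega_{1}t+c\cos\omega_{2}t+d\sin\omega_{2}t$, and the boundary conditions become $x'(0)=x'''(0)=0$ and $x(2r)=x''(2r)=0$. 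The first pair is a homogeneous linear system in $(b,d)$ with determinant $\omega_{1}\omega_{2}(\omega_{1}^{2}-\omega_{2}^{2})\neq 0$, so $b=d=0$; the remaining pair is then a homogeneous system in $(a,c)$ with determinant $(\omega_{1}^{2}-\omega_{2}^{2})\cos(2r\omega_{1})\cos(2r\omega_{2})$. Hence a nontrivial solution exists exactly when $\cos(2r\omega_{i})=0$ for $i=1$ or $i=2$, i.e. when $2r\omega_{i}=\tfrac{(2m-1)\pi}{2}$, i.e. when $\omega_{i}^{2}=(\pi/r)^{2}((2m-1)/4)^{2}=-c_{m}$ for some $m\in\N$; and since $-\omega_{i}^{2}$ is a root of the characteristic polynomial, $\omega_{i}^{2}=-c_{m}$ is equivalent to $c_{m}^{2}+\alpha c_{m}+\beta=0$, that is, to $(\alpha,\beta)\in l_{m}$. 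Counting for how many indices this occurs yields $\dim N(\alpha,\beta)\in\{0,1,2\}$, and in the last two cases the surviving solutions are spanned by the functions $t\mapsto\cos\sqrt{-c_{m}}\,t=\cos\sqrt{-c_{m}}\,(s+r)$, i.e. by the stated $e_{m}$, exactly as in (i)--(iii).

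It then remains to treat the boundary $4\beta=\alpha^{2}$, where the characteristic polynomial is $(\lambda^{2}+\omega^{2})^{2}$ with $\omega^{2}=\alpha/2$ and the general solution combines $\cos\omega t,\sin\omega t,t\cos\omega t,t\sin\omega t$ on $[0,2r]$. The same pattern shows that $x'(0)=x'''(0)=0$ forces the coefficients of $\sin\omega t$ and $t\cos\omega t$ to vanish, and that the two conditions at $t=2r$ give a homogeneous $2\times2$ system in the coefficients of $\cos\omega t$ and $t\sin\omega t$ with determinant $2\omega\cos^{2}(2r\omega)$; so again a nontrivial solution exists precisely when $\cos(2r\omega)=0$, equivalently $\omega^{2}=-c_{m}$, equivalently $(\alpha,\beta)\in l_{m}$, and then $\dim N(\alpha,\beta)=1$ with generator $e_{m}$. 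Two distinct rays $l_{m_{1}},l_{m_{2}}$ meet only at $\alpha=-(c_{m_{1}}+c_{m_{2}})$, $\beta=c_{m_{1}}c_{m_{2}}$, where $\alpha^{2}-4\beta=(c_{m_{1}}-c_{m_{2}})^{2}>0$, so such points lie in the open region $4\beta<\alpha^{2}$ and case (iii) never occurs on the boundary; together with the first paragraph this makes the three alternatives exhaustive and mutually exclusive. I expect the main labour to be the routine but fiddly determinant bookkeeping, compounded by having to dispose of the degenerate case $4\beta=\alpha^{2}$ separately; the one genuinely conceptual point is the equivalence between $(\alpha,\beta)$ lying on the ray $l_{m}$ and $c_{m}$ being one of the two values of $\lambda^{2}$ solving $\lambda^{4}+\alpha\lambda^{2}+\beta=0$, which uses $c_{m}<0$.
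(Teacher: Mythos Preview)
Your argument is correct and complete: the a~priori energy estimate reducing to $Z=\{4\beta\le\alpha^{2}\}$, the explicit solution via the characteristic equation with the substitution $t=s+r$, the determinant bookkeeping for both the generic and the repeated-root cases, and the observation that two distinct rays can meet only in the interior of $Z$ all check out.

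Note, however, that the paper itself does not prove this theorem: it is quoted verbatim from \cite{BoD} and closed with a $\Box$ immediately after the statement. So there is no ``paper's own proof'' to compare against; you have in effect supplied what the paper omits. Your approach---the direct computation of the kernel of a constant-coefficient fourth-order operator under the given boundary conditions---is the natural one, and the preliminary integration-by-parts estimate showing $N(\alpha,\beta)=0$ outside $Z$ is a clean way to avoid having to treat complex or real exponential roots separately.
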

    
    \hspace{10cm} $\Box$
    
    It follows from the implicit function theorem and Proposition \ref{Fredholm}
    that there is no bifurcation at points $(0,\alpha_0,\beta_0,\gamma_0)\in\Gamma$
    if $\dim N(\alpha_0,\beta_0)=0$. Hence Theorem \ref{kernel} shows that bifurcation
    can only occur at multiparameters $(\alpha,\beta,\gamma)$
    where $(\alpha,\beta)\in l_m$ for some $m$.

    Now, the rest of the proof of Theorem \ref{mainthm} splits into two cases,
    where we distinguish between simple and multiple branching points,
    i.e.\ whether the dimension of $N(\alpha,\beta)$ is $1$ or greater.
    
    \subsubsection*{Case 1.: Simple branching points.}
    
  In \cite{BoD}, A.\ Borisovich and J.\ Dymkowska proved the existence
    of bifurcation at a point $(0,\alpha_{0},\beta_{0},\gamma_{0})\in\Gamma$
    in the case $\dim N(\alpha_{0},\beta_{0})=1$ by applying the key function
    method due to Sapronov (see \cite{Sap}).
    
    For the convenience of the reader we present our own proof that is based
    on a variational version of the Crandall-Rabinowitz theorem
    on simple bifurcation points from \cite{Jan}, thus making our exposition
    self-contained.
    
    It will cause no confusion if we use the same letters $X, Y, \Gamma, F$
    and $E$ in the abstract result as in our issue.

    \begin{theorem}[see \cite{Jan}]\label{CraRab}
      Let $X$ and $Y$ be real Banach spaces that are continuously embedded
      in a real Hilbert space $H$ with inner product $\langle\cdot,\cdot\rangle$.
      
      Suppose that a $C^r$-smooth map $F\colon X\times\R\to Y$
      and a $C^{r+1}$-smooth functional $E\colon X\times\R\to\R$
      satisfy the conditions below:
      \begin{itemize}
        \item[$(C_1)$] $F(0,p)=0$ for all $p\in\R$,
        \item[$(C_2)$] $\dim\ker F'_{x}(0,p_0)=1$,
        \item[$(C_3)$] $\codim\,\im\, F'_{x}(0,p_0)=1$,
        \item[$(C_4)$] $E'_{x}(x,p)h=\langle F(x,p),h \rangle$
        for all $x,h\in X$ and $p\in\R$,
        \item[$(C_5)$] $E'''_{xxp}(0,p_0)(e,e,1)\neq 0$, where $e\in X$
        is such that $F'_{x}(0,p_0)e=0$, $\langle e,e\rangle=1$.
      \end{itemize}
      Then the set of solutions of the equation
      
      \begin{displaymath}
        F(x,p)=0
      \end{displaymath}
      in a small neighbourhood of $(0,p_0)$ is composed of two curves:
      $\Gamma$ and $\Lambda$, intersecting only at $(0,p_0)$,
      where $\Gamma$ is the trivial branch
      
      \begin{displaymath}
        \Gamma=\{(0,p)\in X\times\R\colon p\in\R\},
      \end{displaymath}
      and $\Lambda$ is a $C^{r+1}$-smooth curve that can be parametrized
      for some $\delta>0$ as
      
      \begin{displaymath}
        \Lambda=\{(x(t),p(t))\colon t\in(-\delta,\delta)\},
      \end{displaymath}
      where $x(0)=0$, $p(0)=p_{0}$ and $x'(0)=e$.
    \end{theorem}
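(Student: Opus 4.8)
The plan is to perform a Lyapunov--Schmidt reduction adapted to the gradient structure, collapsing the equation $F(x,p)=0$ near $(0,p_0)$ onto a single scalar equation whose nondegeneracy is exactly $(C_5)$. First I would fix the functional framework. By $(C_2)$ and $(C_3)$ the operator $L:=F'_x(0,p_0)$ satisfies $\ker L=\mathrm{span}(e)$ and $\codim\im L=1$. Differentiating $(C_4)$ in $x$ yields $E''_{xx}(x,p)(h,k)=\langle F'_x(x,p)h,k\rangle$, so $L$ is self-adjoint for $\langle\cdot,\cdot\rangle$; in particular $\langle Lh,e\rangle=\langle h,Le\rangle=0$ for every $h$, that is $e\perp\im L$. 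I would split $X=\mathrm{span}(e)\oplus X_1$ with $X_1=\{h\in X\colon\langle h,e\rangle=0\}$ and fix a projection $Q$ onto a complement of $\im L$ in $Y$; because $e\perp\im L$, self-adjointness makes the scalar cokernel equation $QF=0$ equivalent to $\langle F,e\rangle=0$.

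Next I would solve the range equation. Writing $x=te+w$ with $w\in X_1$ and separating $F(te+w,p)=0$ into its $\im L$-component and the complementary scalar component, the first has invertible $w$-derivative $L|_{X_1}\colon X_1\to\im L$ at $(0,0,p_0)$, so the implicit function theorem furnishes a solution $w=w(t,p)$ of class $C^r$ with $w(0,p_0)=0$. Since $F(0,p)=0$ forces $w(0,p)\equiv0$, I get $w_p(0,p_0)=0$, and differentiating the range equation in $t$ while using $Le=0$ gives $w_t(0,p_0)=0$. By the previous paragraph the surviving equation is the scalar one
\[
\psi(t,p):=\langle F(te+w(t,p),p),e\rangle=E'_x(te+w(t,p),p)\,e=0,
\]
the second equality being $(C_4)$ with $h=e$.

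The heart of the argument is the isolation of the nontrivial branch. Since $\psi(0,p)\equiv0$, Hadamard's lemma gives $\psi(t,p)=t\,\tilde\psi(t,p)$, and nontrivial solutions are precisely the zeros of $\tilde\psi$. Here $\tilde\psi(0,p_0)=\psi_t(0,p_0)=\langle Le,e\rangle=0$, so everything hinges on showing $\tilde\psi_p(0,p_0)=\psi_{tp}(0,p_0)\neq0$. Differentiating $\psi_t=E''_{xx}(te+w,p)(e+w_t,e)$ in $p$ and evaluating at $(0,p_0)$, the purely third-order-in-$x$ term carries the factor $x_p(0,p_0)=w_p(0,p_0)=0$ and drops out, the term $E''_{xx}(0,p_0)(x_{tp},e)=\langle Lx_{tp},e\rangle$ vanishes because $e\perp\im L$, and $w_t(0,p_0)=0$ reduces the remaining mixed term to $E'''_{xxp}(0,p_0)(e,e,1)$, which is nonzero by $(C_5)$.

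It then remains to assemble the branch. The implicit function theorem applied to $\tilde\psi$ yields a unique $p=p(t)$ with $p(0)=p_0$ near $t=0$, and $\Lambda=\{(te+w(t,p(t)),p(t))\colon t\in(-\delta,\delta)\}$ satisfies $x(0)=0$, $p(0)=p_0$ and $x'(0)=e+w_t(0,p_0)+w_p(0,p_0)p'(0)=e$; every sufficiently small solution has either $t=0$, giving the trivial branch $\Gamma$, or $\tilde\psi(t,p)=0$, giving $\Lambda$, and the two meet only at $(0,p_0)$. I expect the transversality computation of the preceding paragraph to be the main obstacle: the real content is that all the spurious contributions cancel so that the abstract nondegeneracy collapses precisely onto the single quantity $(C_5)$, and this rests on the simultaneous vanishing of $w$, $w_t$, $w_p$ at the base point together with the self-adjointness relation $e\perp\im L$. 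The regularity of $\Lambda$ would then follow by carrying the derivative count through the reduction, the gradient structure guaranteeing that the reduced data inherit the smoothness of $E$.
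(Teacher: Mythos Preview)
The paper does not prove this theorem: it is quoted verbatim from \cite{Jan} and used as a black box in Case~1 of the proof of Theorem~\ref{mainthm}. There is therefore no ``paper's own proof'' to compare against.

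That said, your argument is a correct and standard proof of this variational Crandall--Rabinowitz statement. The key steps---self-adjointness of $L$ from $(C_4)$ giving $e\perp\im L$, the Lyapunov--Schmidt splitting $x=te+w$ with $w\in e^{\perp}$, the vanishing of $w$, $w_t$, $w_p$ at $(0,p_0)$, and the computation showing $\psi_{tp}(0,p_0)=E'''_{xxp}(0,p_0)(e,e,1)$ after all other terms drop out---are exactly the right ones, and your identification of the transversality computation as the crux is accurate. One small point you leave slightly vague is the $C^{r+1}$ regularity of $\Lambda$: since $F$ is only $C^r$, the implicit function theorem gives $w$ of class $C^r$, and one needs the observation that $\psi(t,p)=\partial_t\bigl(E(te+w(t,p),p)\bigr)$ (which follows from $\langle e,w_t\rangle=0$ and the fact that $F(te+w,p)\in\mathrm{span}(e)$ after solving the range equation) to squeeze the extra derivative out of the $C^{r+1}$ smoothness of $E$. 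Your closing sentence points at this, but it is worth making the identity $\tilde E_t=\psi$ explicit.
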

    
    \hspace{10cm} $\Box$
    
    Combining \eqref{vargrad} with Proposition \ref{Fredholm},
    the proof of Theorem \ref{mainthm} in the first case will be completed
    by showing that at least one of the partial derivatives
    $E'''_{xx\alpha}(0,\alpha_{0},\beta_{0},\gamma_{0})(e_{m},e_{m},1)$
    or $E'''_{xx\beta}(0,\alpha_{0},\beta_{0},\gamma_{0})(e_{m},e_{m},1)$
    is not trivial, whe\-re $e_m$ is the function introduced
    in Theorem \ref{kernel}.
    
    An easy computation shows that
    
    \begin{equation}\label{alfa}
      E'''_{xx\alpha}(0,\alpha_{0},\beta_{0},\gamma_{0})(e_{m},e_{m},1)=
      -\langle e'_{m},e'_{m} \rangle=
      c_{m}<0
    \end{equation}
    and
    
    \begin{equation}\label{beta}
      E'''_{xx\beta}(0,\alpha_{0},\beta_{0},\gamma_{0})(e_{m},e_{m},1)=
      \langle e_{m},e_{m} \rangle=
      1>0.
    \end{equation}
    
    By Theorem \ref{CraRab}, $(0,\alpha_{0},\beta_{0},\gamma_{0})$ is
    a branching point of the equation \eqref{OE} both with respect to
    the parameter of compressive force $\alpha$ and with respect to
    the parameter of the elastic foundation $\beta$.
    Moreover, the solution set of \eqref{OE} in a small neighbourhood
    of $(0,\alpha_{0},\beta_{0},\gamma_{0})$ contains the trivial family
    $\Gamma$ and two $C^{\infty}$-smooth curves $\Lambda_{1}$,
    $\Lambda_{2}$ of the form
    
    \begin{displaymath}
      \Lambda_{1}=\{(x_{1}(t),\alpha(t))\colon |t|<\delta_{1}\}
      \subset X\times\R_{+}\times\{(\beta_{0},\gamma_{0})\},
    \end{displaymath}
    where $x_{1}(0)=0$, $\alpha(0)=\alpha_{0}$, $x'_{1}(0)=e_{m}$,
    and
    
    \begin{displaymath}
      \Lambda_{2}=\{(x_{2}(t),\beta(t))\colon |t|<\delta_{2}\}
      \subset X\times\R_{+}\times\{(\alpha_{0},\gamma_{0})\},
    \end{displaymath}
    where $x_{2}(0)=0$, $\beta(0)=\beta_{0}$, $x'_{2}(0)=e_{m}$.
    Hence $(0,\alpha_0,\beta_0,\gamma_0)$ is a branching point.
    
    \subsubsection*{Case 2.: Multiple branching points.}
    
    We now turn to multiple branching points.
    
    Here the method based on the Crandall-Rabinowitz theorem
    does not work anymore. In order to prove the existence of branching points
    also in this case, we will make a finite-dimensional reduction
    of Lyapunov-Schmidt type.
    
    Let $\alpha_0,\beta_0,\gamma_0\in\R_{+}$ be such that \[\dim N(\alpha_0,\beta_0)=2,\]
    and let $e_{m_1}$ and $e_{m_2}$ be the corresponding functions in Theorem \ref{kernel}.
    Since
    
    \begin{displaymath}
      (\alpha_0,\beta_0)\in
      \overline{\{(\alpha,\beta)\in\R_{+}^{2}\colon \dim N(\alpha,\beta)=1\}},
    \end{displaymath}
    we see that $(0,\alpha_0,\beta_0,\gamma_0)$ is a bifurcation point,
    and we shall now show that it is a branching point. 
    We define a map $G\colon X \times \R^{2} \times \R_{+} \times \R_{+} \to Y$ by
    
		\begin{displaymath}
   	  G(x,\xi,\alpha,\beta)=F(x,\alpha,\beta,\gamma_0)
   	  +\sum_{i=1}^{2}\left(\xi_{i}-\left\langle x, e_{m_i}\right\rangle\right)e_{m_i},
    \end{displaymath}
    where $x\in X$, $\xi=(\xi_{1},\xi_{2})\in\R^2$ and $\alpha,\beta\in\R_{+}$.
    
    It is easily seen that
    
    \begin{displaymath}
      G'_{x}(0,0,\alpha_0,\beta_0)h=F'_{x}(0,\alpha_0,\beta_0,\gamma_0)
      -\sum_{i=1}^{2}\left\langle h, e_{m_i}\right\rangle e_{m_i},
    \end{displaymath}
    where $h\in X$, is an isomorphism of $X$ onto $Y$.
    
    By the implicit function theorem there exist open subsets
    $U\subset X$ and	$S \subset \R^{2} \times \R_{+} \times \R_{+}$
    such that $0\in U$, $(0,\alpha_0,\beta_0)\in S$,
    and the set
    
    \begin{displaymath}
      \{(x,\xi,\alpha,\beta)\in U\times S \colon G(x,\xi,\alpha,\beta)=0\}
    \end{displaymath}
    is the graph of a smooth function $\tilde{x}\colon S\to U$ satisfying  
		$\tilde{x}(0,\alpha_0,\beta_0)=0$. Moreover, since $G(0,0,\alpha,\beta)=0$
		for all $\alpha,\beta\in\R_{+}$, it follows that $\tilde{x}(0,\alpha,\beta)=0$
		for all $(0,\alpha,\beta)\in S$.
		
		We now introduce a function $\varphi=(\varphi_1,\varphi_2)\colon S\to\R^{2}$ by
		
		\begin{equation}\label{var} 
		  \varphi_{i}(\xi,\alpha,\beta)=
		  \xi_{i}-\left\langle \tilde{x}(\xi,\alpha,\beta), e_{m_i} \right\rangle,\,\, i=1,2,
		\end{equation}
		and we note that $\varphi$ is smooth and $\varphi(0,\alpha,\beta)=0$
		for all $(0,\alpha,\beta)\in S$.
		
		\begin{theorem}[see \cite{Jan}]{\label{LapSch}}
		
		The point $(0,\alpha_0,\beta_0,\gamma_0)\in X \times \R_{+}^{3}$
		is a bifurcation point (a branching point) of \eqref{OE}
		if and only if the point $(0,\alpha_0,\beta_0)\in \R^{2} \times \R_{+} \times \R_{+}$
		is a bifurcation point (a branching point) of the equation
		
		\begin{equation}\label{RE}
      \varphi(\xi,\alpha,\beta)=0.  
    \end{equation}
    \end{theorem}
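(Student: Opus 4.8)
The plan is to deduce both equivalences in the theorem from a single fact: near the distinguished point the zero set of $F(\cdot,\cdot,\cdot,\gamma_0)$ and the zero set of $\varphi$ are homeomorphic via a homeomorphism that carries $(0,\alpha_0,\beta_0,\gamma_0)$ to $(0,\alpha_0,\beta_0)$ and the trivial family onto the trivial family. To build it, shrink $U$ and $S$ if necessary and let $\mathcal N\subset X\times\R_{+}^{2}$ be a neighbourhood of $(0,\alpha_0,\beta_0)$ so small that $(x,\alpha,\beta)\in\mathcal N$ implies $x\in U$ and $\bigl((\langle x,e_{m_1}\rangle,\langle x,e_{m_2}\rangle),\alpha,\beta\bigr)\in S$. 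Put $\Phi(x,\alpha,\beta)=\bigl((\langle x,e_{m_1}\rangle,\langle x,e_{m_2}\rangle),\alpha,\beta\bigr)$ and $\Psi(\xi,\alpha,\beta)=(\tilde x(\xi,\alpha,\beta),\alpha,\beta)$; both are continuous, since $x\mapsto\langle x,e_{m_i}\rangle$ is a bounded linear functional on $X$ and $\tilde x$ is smooth. The heart of the matter is the elementary pair of observations, each following from the definition of $G$ and the uniqueness clause of the implicit function theorem: if $F(x,\alpha,\beta,\gamma_0)=0$ and $\xi_i=\langle x,e_{m_i}\rangle$, then $G(x,\xi,\alpha,\beta)=0$, hence $x=\tilde x(\xi,\alpha,\beta)$ and therefore $\varphi(\xi,\alpha,\beta)=0$; conversely, if $\varphi(\xi,\alpha,\beta)=0$, then with $x=\tilde x(\xi,\alpha,\beta)$ the relation $G(x,\xi,\alpha,\beta)=0$ collapses to $F(x,\alpha,\beta,\gamma_0)=0$. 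From these one reads off that $\Phi$ and $\Psi$ are mutually inverse on the respective solution sets, so $\Phi$ restricts to a homeomorphism of $\{(x,\alpha,\beta)\in\mathcal N\colon F(x,\alpha,\beta,\gamma_0)=0\}$ onto $\{(\xi,\alpha,\beta)\in S'\colon\varphi(\xi,\alpha,\beta)=0\}$ for a suitable smaller neighbourhood $S'\subset S$, taking $(0,\alpha_0,\beta_0)$ to $(0,\alpha_0,\beta_0)$; and since $\tilde x(0,\alpha,\beta)=0$, both $\Phi$ and $\Psi$ map the trivial branch onto the trivial branch and hence exchange nontrivial solutions.

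For bifurcation points I would argue with sequences: $(0,\alpha_0,\beta_0,\gamma_0)$ is a bifurcation point of \eqref{OE} exactly when there are nontrivial solutions of $F(\cdot,\cdot,\cdot,\gamma_0)=0$ arbitrarily close to it, and $\Phi$ transports such a sequence to a sequence of nontrivial solutions of \eqref{RE} tending to $(0,\alpha_0,\beta_0)$ (here $x_n\to 0$ in $X$ forces $\langle x_n,e_{m_i}\rangle\to 0$), while $\Psi$, together with the continuity of $\tilde x$ at $(0,\alpha_0,\beta_0)$, gives the converse. For branching points, suppose $(0,\alpha_0,\beta_0)$ lies in a continuum $\mathcal C$ of nontrivial solutions of \eqref{RE}. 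Fix a closed ball $\overline{B}$ about $(0,\alpha_0,\beta_0)$ contained in $S'$ and let $\mathcal D$ be the connected component of $(0,\alpha_0,\beta_0)$ in the compact set $\mathcal C\cap\overline{B}$. If $\mathcal C\subset\overline{B}$ then $\mathcal D=\mathcal C$; otherwise a standard boundary–bumping argument shows that $\mathcal D$ meets $\partial B$, so in either case $\mathcal D$ is a continuum of solutions of \eqref{RE}, consisting of nontrivial solutions apart from $(0,\alpha_0,\beta_0)$, and lying in the domain of $\Psi$. Then $\Psi(\mathcal D)$ is a compact, hence closed, connected set of solutions of $F(\cdot,\cdot,\cdot,\gamma_0)=0$ that contains $(0,\alpha_0,\beta_0,\gamma_0)$ and is otherwise made up of nontrivial solutions, which is precisely what is required to call $(0,\alpha_0,\beta_0,\gamma_0)$ a branching point of \eqref{OE}. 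The reverse implication is the same argument with $\Phi$ in place of $\Psi$.

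The algebraic verifications underlying the correspondence $\Phi\leftrightarrow\Psi$ are routine bookkeeping with the definition of $G$; the only delicate point is the branching-point direction, where a continuum of the reduced equation \eqref{RE} need not remain inside the region on which the Lyapunov–Schmidt reduction is valid, so one must first carve out of it a nondegenerate sub-continuum sitting inside that region before carrying it over by the homeomorphism.
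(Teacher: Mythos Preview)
The paper does not prove Theorem~\ref{LapSch}; it is quoted from \cite{Jan} and used as a black box. Your argument supplies exactly the standard proof one would expect: you show that the assignment $\Phi(x,\alpha,\beta)=((\langle x,e_{m_1}\rangle,\langle x,e_{m_2}\rangle),\alpha,\beta)$ and its inverse $\Psi(\xi,\alpha,\beta)=(\tilde x(\xi,\alpha,\beta),\alpha,\beta)$ restrict to mutually inverse homeomorphisms between the local zero sets of $F(\cdot,\cdot,\cdot,\gamma_0)$ and $\varphi$, carrying trivial solutions to trivial solutions. The two verifications (that $F=0$ forces $x=\tilde x(\xi,\alpha,\beta)$ via the uniqueness part of the implicit function theorem, and that $\varphi=0$ collapses $G=0$ to $F=0$) are correct and are the whole content of the Lyapunov--Schmidt correspondence.

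Your treatment of the branching-point direction is careful in a place where many expositions are sloppy: you correctly note that an abstract continuum $\mathcal C$ of solutions of \eqref{RE} need not lie inside the domain $S$ on which $\tilde x$ and $\varphi$ are defined, and you extract a nondegenerate sub-continuum $\mathcal D\subset\overline B\subset S'$ by the boundary-bumping lemma before transporting it by $\Psi$. That lemma does apply here, since $\mathcal C\cap\overline B$ is compact in $\R^4$ and components coincide with quasi-components in compact metric spaces; your one-line invocation of it is justified.

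One small remark: you work throughout with the slice $\gamma=\gamma_0$, matching the paper's construction of $G$, $\tilde x$ and $\varphi$. Strictly speaking the paper's Definition~2.1 allows $\gamma$ to vary as well, so the implication ``bifurcation of \eqref{OE} $\Rightarrow$ bifurcation of \eqref{RE}'' would in principle require absorbing $\gamma$ into the reduction. This is harmless (one simply adds $\gamma$ as a further parameter in $G$ and in the implicit function theorem) and is in any case irrelevant for the application in the paper, where only the reverse implication is used.
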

    
    \hspace{10cm} $\Box$
    
    The rest of the argument is based on the concept of topological degree
    due to Brouwer. To be more precise, we will apply a theorem of Krasnosielski,
    which we recall for the convenience of the reader.
    
    \begin{theorem}[see \cite{Jan}]\label{Kras}
      If $(0,\lambda_{0},\beta_{0})\in S$ is not a bifurcation point
      of equation \eqref{RE} then there exist open sets $V_{1}\subset\R^{2}$
      and $V_{2}\subset\R_{+}\times\R_{+}$ satisfying:
      
      \begin{itemize}
        \item[$(i)$] $(0,\alpha_{0},\beta_{0})\in V_{1}\times V_{2}\subset S$.
        \item[$(ii)$] For each open subset $V\subset V_{1}$ such that $0\in V$
        and for all $(\alpha,\beta),(\tilde\alpha,\tilde\beta)\in V_{2}$
        the mappings $\varphi(\cdot,\alpha,\beta)$
        and $\varphi(\cdot,\tilde\alpha,\tilde\beta)$ have no zeros
        on the boundary of $V$ and
        
        \begin{equation}\label{KrasAlter}
          \deg(\varphi(\cdot,\alpha,\beta),V,0)=
          \deg(\varphi(\cdot,\tilde\alpha,\tilde\beta),V,0).
        \end{equation}
      \end{itemize}
    \end{theorem}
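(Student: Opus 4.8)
The plan is to exploit the hypothesis that $(0,\alpha_0,\beta_0)$ has a neighbourhood containing no nontrivial zeros of $\varphi$, and then to combine well-definedness of the Brouwer degree with its homotopy invariance. Recall first that $\varphi(0,\alpha,\beta)=0$ whenever $(0,\alpha,\beta)\in S$, so the trivial solutions of \eqref{RE} form the family $\{(0,\alpha,\beta)\}$. Since $(0,\alpha_0,\beta_0)$ is not a bifurcation point of \eqref{RE}, there is an open neighbourhood of it in $\R^{2}\times\R_{+}\times\R_{+}$ in which the only zeros of $\varphi$ are these trivial ones. Inside this neighbourhood and inside $S$ I would pick $V_{1}\subset\R^{2}$ an open ball centred at $0$ and $V_{2}\subset\R_{+}\times\R_{+}$ an open ball centred at $(\alpha_0,\beta_0)$, taken small enough that $\overline{V_{1}}\times\overline{V_{2}}$ still lies in $S$ and still contains no nontrivial zero of $\varphi$; this already yields $(i)$, and in particular, for every $(\alpha,\beta)\in\overline{V_{2}}$ the map $\varphi(\cdot,\alpha,\beta)$ vanishes on $\overline{V_{1}}$ only at $\xi=0$.

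Next, let $V\subset V_{1}$ be open with $0\in V$. Since $V$ is open we have $0\notin\partial V$, while $\partial V\subset\overline{V_{1}}$; hence for every $(\alpha,\beta)\in\overline{V_{2}}$ the continuous map $\varphi(\cdot,\alpha,\beta)$ has no zero on $\partial V$, so the Brouwer degree $\deg(\varphi(\cdot,\alpha,\beta),V,0)$ is well defined (note $\overline{V}$ is compact because $V_{1}$ is bounded). This settles the "no zeros on $\partial V$" part of $(ii)$.

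Finally, fix $(\alpha,\beta),(\tilde\alpha,\tilde\beta)\in V_{2}$ and consider the homotopy
\[
  h(\xi,t)=\varphi\bigl(\xi,(1-t)(\alpha,\beta)+t(\tilde\alpha,\tilde\beta)\bigr),
  \qquad (\xi,t)\in\overline{V}\times[0,1].
\]
Because $V_{2}$ is convex, the parameter segment stays in $V_{2}\subset\overline{V_{2}}$, so by the previous paragraph $h(\cdot,t)$ has no zero on $\partial V$ for any $t\in[0,1]$, and $h$ is continuous since $\varphi$ is smooth. Homotopy invariance of the Brouwer degree then gives \eqref{KrasAlter}, completing the proof.

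The argument is essentially bookkeeping; the only point requiring care is the order in which the neighbourhoods are chosen, so that a single pair $V_{1},V_{2}$ works simultaneously for every admissible $V$ and for the homotopy. This is what forces $V_{1}$ to be bounded (to keep $\overline{V}$ compact), $\overline{V_{1}}\times\overline{V_{2}}$ to avoid all nontrivial zeros (to keep $\varphi(\cdot,\alpha,\beta)$ admissible on every possible $\partial V$ and along the homotopy), and $V_{2}$ to be convex (to run the straight-line homotopy). No deeper difficulty is expected.
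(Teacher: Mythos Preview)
Your proof is correct and follows the standard route to this Krasnosel'skii-type statement: isolate the trivial zero by choosing a product neighbourhood $\overline{V_{1}}\times\overline{V_{2}}$ free of nontrivial solutions, observe that any admissible $V$ has $\partial V\subset\overline{V_{1}}\setminus\{0\}$ so the degree is defined, and then invoke homotopy invariance along a straight segment in the convex set $V_{2}$. All the technical caveats you flag (boundedness of $V_{1}$, convexity of $V_{2}$, keeping $\overline{V_{1}}\times\overline{V_{2}}\subset S$) are exactly the right ones.

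As for comparison with the paper: there is nothing to compare. The paper does not prove Theorem~\ref{Kras}; it merely quotes it from \cite{Jan} and marks it with a $\Box$. Your write-up therefore supplies what the paper omits, and the argument you give is precisely the one that underlies the cited result.
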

    
    \hspace{10cm} $\Box$
    
    Here and subsequently, $\deg\left(\varphi(\cdot,\alpha,\beta),V,0\right)$
    stands for the Brouwer degree of the map $\varphi(\cdot,\alpha,\beta)$
    on the set $V$ with respect to $0$.
    
    We do not want to recapitulate degree theory here, however,
    let us point out the important fact that in our case
    for each $(\alpha,\beta)\in V_2$ there is a neighbourhood
    $V\subset V_{1}$ of $0$ such that 
    
    \begin{displaymath}
      \deg\left(\varphi(\cdot,\alpha,\beta),V,0\right)=
      \mbox{sgn}\det[\varphi'_{\xi}(0,\alpha,\beta)].
    \end{displaymath}
    
    We now proceed to show that $(0,\alpha_0,\beta_0)\in \R^{2} \times \R_{+} \times \R_{+}$
		is a branching point of \eqref{RE}.
		It is well-known from bifurcation theory and degree theory that it is sufficient
		to prove that the equality \eqref{KrasAlter} does not hold.
		 
		Differentiating
		
		\begin{displaymath}
      G(\tilde{x}(\xi,\alpha,\beta),\xi,\alpha,\beta)=0  
    \end{displaymath}
		with respect to $\xi$ we get
		
		\begin{align*}
		\begin{split}
      F'_{x}(\tilde{x}(\xi,\alpha,\beta),\alpha,\beta,\gamma_0)&
      \sum_{j=1}^{2}\frac{\partial\tilde{x}}{\partial\xi_{j}}(\xi,\alpha,\beta)t_{j}
      +\sum_{j=1}^{2}t_{j}e_{m_j}\\
      &-\sum_{i=1}^{2}\sum_{j=1}^{2}\left\langle
      \frac{\partial\tilde{x}}{\partial\xi_{j}}(\xi,\alpha,\beta)t_{j}, e_{m_i}
      \right\rangle e_{m_i}=0,
    \end{split}
    \end{align*}
		for all $t=(t_1,t_2)\in\R^{2}$. Hence
		
		\begin{align}
		\begin{split}\label{impliciteq}
      F'_{x}(0,\alpha,\beta,\gamma_0)&
      \sum_{j=1}^{2}\frac{\partial\tilde{x}}{\partial\xi_{j}}(0,\alpha,\beta)t_{j}
      +\sum_{j=1}^{2}t_{j}e_{m_j}\\
      &-\sum_{i=1}^{2}\sum_{j=1}^{2}\left\langle
      \frac{\partial\tilde{x}}{\partial\xi_{j}}(0,\alpha,\beta)t_{j}, e_{m_i}
      \right\rangle e_{m_i}=0,
    \end{split}
    \end{align}
    and combining \eqref{impliciteq} and \eqref{map} we have
    
    \begin{align}
    \begin{split}\label{impmap}
      \frac{d^{4}}{ds^{4}}
      \sum_{j=1}^{2}\frac{\partial\tilde{x}}{\partial\xi_{j}}(0,\alpha,\beta)t_{j}
      & +\alpha\frac{d^{2}}{ds^{2}}
      \sum_{j=1}^{2}\frac{\partial\tilde{x}}{\partial\xi_{j}}(0,\alpha,\beta)t_{j}
      +\beta\sum_{j=1}^{2}\frac{\partial\tilde{x}}{\partial\xi_{j}}(0,\alpha,\beta)t_{j}\\
      & +\sum_{j=1}^{2}t_{j}e_{m_j}
      -\sum_{i=1}^{2}\sum_{j=1}^{2}\left\langle
      \frac{\partial\tilde{x}}{\partial\xi_{j}}(0,\alpha,\beta)t_{j}, e_{m_i}
      \right\rangle e_{m_i}=0.
    \end{split}
    \end{align}
    If we now substitute into $t=(t_1,t_2)$ the vectors $(1,0)$ and $(0,1)$ subsequently,
    we obtain
    
    \begin{align}
    \begin{split}
      \frac{d^{4}}{ds^{4}}
      \frac{\partial\tilde{x}}{\partial\xi_{j}}(0,\alpha,\beta)
      & +\alpha\frac{d^{2}}{ds^{2}}
      \frac{\partial\tilde{x}}{\partial\xi_{j}}(0,\alpha,\beta)
      +\beta\frac{\partial\tilde{x}}{\partial\xi_{j}}(0,\alpha,\beta)\\
      & +e_{m_j}-\sum_{i=1}^{2}\left\langle
      \frac{\partial\tilde{x}}{\partial\xi_{j}}(0,\alpha,\beta), e_{m_i}
      \right\rangle e_{m_i}=0
    \end{split}
    \end{align}
    for $j=1,2$. Therefore
    
    \begin{align*}
    \begin{split}
      \left\langle\frac{d^{4}}{ds^{4}}
      \frac{\partial\tilde{x}}{\partial\xi_{j}}(0,\alpha,\beta),e_{m_k}\right\rangle
      & +\left\langle\alpha\frac{d^{2}}{ds^{2}}
      \frac{\partial\tilde{x}}{\partial\xi_{j}}(0,\alpha,\beta),e_{m_k}\right\rangle
      +\left\langle
      \beta\frac{\partial\tilde{x}}{\partial\xi_{j}}(0,\alpha,\beta),e_{m_k}
      \right\rangle\\
      & +\left\langle e_{m_j},e_{m_k}\right\rangle-\left\langle
      \frac{\partial\tilde{x}}{\partial\xi_{j}}(0,\alpha,\beta), e_{m_k}\right\rangle=0
    \end{split}
    \end{align*}
    for $j=1,2$ and $k=1,2$. Applying Proposition \ref{self-adjoint} 
    we see that
    
    \begin{equation}\label{adjoint}
      \left\langle\frac{\partial\tilde{x}}{\partial\xi_{j}}(0,\alpha,\beta),
      e_{m_k}^{(4)}+\alpha e''_{m_k}+\beta e_{m_k}-e_{m_k}\right\rangle=
      -\left\langle e_{m_j}, e_{m_k} \right\rangle
    \end{equation}
    for $j=1,2$ and $k=1,2$.
    Since $e''_{m_k}=c_{m_k}e_{m_k}$ and $e^{(4)}_{m_k}=c_{m_k}^{2}e_{m_k}$
    for $k=1,2$, we obtain
    
    \begin{displaymath}
	    \left\langle
	    \frac{\partial\tilde{x}}{\partial\xi_{j}}(0,\alpha,\beta),e_{m_k}
	    \right\rangle=
	    \left\{
	      \begin{array}{cl}
	        -\frac{1}{c_{m_k}^{2}+\alpha c_{m_k}+\beta-1} & \textrm{if $j=k$} \\
	        0 & \textrm{if $j\neq k$}
	      \end{array}
	    \right.,
	  \end{displaymath}
	  by \eqref{adjoint}. Now \eqref{var} yields
	  
	  \begin{displaymath}
		  \frac{\partial\varphi_k}{\partial\xi_{j}}(0,\alpha,\beta)=
		  \left\{
		    \begin{array}{cl}
        \frac{c_{m_k}^{2}+\alpha c_{m_k}+\beta}{c_{m_k}^{2}+\alpha c_{m_k}+\beta-1}
        & \mbox{if $j=k$} \\
        0 & \mbox{if $j\neq k$}
        \end{array}
      \right.
		\end{displaymath}
		and so
		
		\begin{equation}\label{matrix}
		  [\varphi'_{\xi}(0,\alpha,\beta)]=
		  \left[
		    \begin{array}{cc}
		      \frac{c_{m_1}^{2}+\alpha c_{m_1}+\beta}{c_{m_1}^{2}+\alpha c_{m_1}+\beta-1}
		      & 0 \\
		      0
		      & \frac{c_{m_2}^{2}+\alpha c_{m_2}+\beta}{c_{m_2}^{2}+\alpha c_{m_2}+\beta-1}
		    \end{array}
		  \right].
		\end{equation}
		Furthermore, it follows from Theorem \ref{kernel} that
		
		\begin{displaymath}
		  \beta_{0}=-c_{m_1}\alpha_{0}-c_{m_1}^{2}
		  \,\,\, \mbox{and} \,\,\, 
		  \beta_{0}=-c_{m_2}\alpha_{0}-c_{m_2}^{2},
		\end{displaymath}
		and so
		
		\begin{displaymath}
		  c_{m_1}^{2}=-c_{m_1}\alpha_{0}-\beta_{0}
		  \,\,\, \mbox{and} \,\,\, 
		  c_{m_2}^{2}=-c_{m_2}\alpha_{0}-\beta_{0}.
		\end{displaymath}
		Hence \eqref{matrix} now becomes
		
		\begin{displaymath}
		  [\varphi'_{\xi}(0,\alpha,\beta)]=
		  \left[
		    \begin{array}{cc}
		      \frac{(\alpha-\alpha_{0})c_{m_1}+\beta-\beta_{0}}
		      {(\alpha-\alpha_{0})c_{m_1}^{2}+\beta-\beta_{0}-1} & 0 \\
		      0 &  \frac{(\alpha-\alpha_{0})c_{m_2}+\beta-\beta_{0}}
		      {(\alpha-\alpha_{0})c_{m_2}^{2}+\beta-\beta_{0}-1}
		    \end{array}
		  \right],
		\end{displaymath}
		and in consequence,
		
		\begin{equation}\label{determinant}
		  \det[\varphi'_{\xi}(0,\alpha,\beta)]=
		  \frac{(\alpha-\alpha_{0})c_{m_1}+\beta-\beta_{0}}
		  {(\alpha-\alpha_0)c_{m_1}^{2}+\beta-\beta_{0}-1}
		  \cdot\frac{(\alpha-\alpha_{0})c_{m_2}+\beta-\beta_{0}}
		  {(\alpha-\alpha_{0})c_{m_2}^{2}+\beta-\beta_{0}-1}.
		\end{equation}
		
		Our aim is now to determine the sign of \eqref{determinant} 
		at points in a small neighbourhood of $(0,\alpha_{0},\beta_{0})$.
		
		We first note that there exists $\varepsilon>0$
		such that the denominator of \eqref{determinant} is positive for every
		$(\alpha,\beta)\in\left(\alpha_{0}-\varepsilon,\alpha_{0}+\varepsilon\right)
		\times\left(\beta_{0}-\varepsilon,\beta_{0}+\varepsilon\right)$.
		
		Let $n(\alpha,\beta)$ denote the numerator of \eqref{determinant},
		i.e.
		
		\begin{displaymath}
		  n(\alpha,\beta)=
		  \left((\alpha-\alpha_{0})c_{m_1}+\beta-\beta_{0}\right)
		  \cdot\left((\alpha-\alpha_{0})c_{m_2}+\beta-\beta_{0}\right).
		\end{displaymath}
		
		For $\alpha\neq\alpha_{0}$ we have
		
		\begin{displaymath}
		  n(\alpha,\beta)=
		  \left(\alpha-\alpha_{0}\right)^{2}
		  \left(c_{m_1}+\frac{\beta-\beta_{0}}{\alpha-\alpha_{0}}\right)
		  \left(c_{m_2}+\frac{\beta-\beta_{0}}{\alpha-\alpha_{0}}\right).
    \end{displaymath}
		We can assume without loss of generality that $m_{1}<m_{2}$.
		Then $c_{m_1}>c_{m_2}$ by \eqref{coefficients}, and we can check at once that
		
		\begin{displaymath}
		  \mbox{sign}\det[\varphi'_{\xi}(0,\alpha,\beta)]=
		  \left\{
		    \begin{array}{ll}
		      1 & \mbox{if $\frac{\beta-\beta_{0}}{\alpha-\alpha_{0}}
		      \in(-\infty,-c_{m_1})\cup(-c_{m_2},\infty)$}\\
		      -1 & \mbox{if $\frac{\beta-\beta_{0}}{\alpha-\alpha_{0}}
		      \in(-c_{m_1},-c_{m_2})$}
		    \end{array}
		  \right..
		\end{displaymath}
		Now let us suppose, contrary to our claim, that $(0,\alpha_{0},\beta_{0})$
		is not a bifurcation point of the equation \eqref{RE}.
		Let $V_{1}\subset\R^{2}$ and $V_{2}\subset\R_{+}\times\R_{+}$
		be the open sets as in Theorem \ref{Kras}.
		Clearly, there are $(\alpha,\beta)$	and $(\tilde\alpha,\tilde\beta)$
		in $V_{2}\cap(\alpha_{0}-\varepsilon,\alpha_{0}+\varepsilon)
		\times(\beta_{0}-\varepsilon,\beta_{0}+\varepsilon)$ such that
		
		\begin{displaymath}
		  \frac{\beta-\beta_{0}}{\alpha-\alpha_{0}}
		  \in(-\infty,-c_{m_1})\cup(-c_{m_2},\infty)
		\end{displaymath}
		and
		
		\begin{displaymath}
		  \frac{\tilde\beta-\beta_{0}}{\tilde\alpha-\alpha_{0}}
		  \in(-c_{m_1},-c_{m_2}).
		\end{displaymath}
		We now take a neighbourhood $V\subset V_{1}$ of $0$
		such that the Brouwer degrees of $\varphi(\cdot,\alpha,\beta)$
		and $\varphi(\cdot,\tilde\alpha,\tilde\beta)$ on $V$ with respect to $0$
		are the same as the signs of $\det[\varphi'_{\xi}(0,\alpha,\beta)]$
		and $\det[\varphi'_{\xi}(0,\tilde\alpha,\tilde\beta)]$ respectively.
		We get
		
		\begin{displaymath}
		  \deg(\varphi(\cdot,\alpha,\beta),V,0)=
		  \mbox{sgn}\det[\varphi'_{\xi}(0,\alpha,\beta)]=1
		\end{displaymath}
		and
		
		\begin{displaymath}
		  \deg(\varphi(\cdot,\tilde\alpha,\tilde\beta),V,0)=
		  \mbox{sgn}\det[\varphi'_{\xi}(0,\tilde\alpha,\tilde\beta)]=-1,
		\end{displaymath}
		which contradicts the equality \eqref{KrasAlter}.
		Hence $(0,\alpha_{0},\beta_{0},\gamma_{0})$ is a branching point
		of the equation \eqref{OE}.
		    
    
    \bibstyle

    \vspace{1cm}
    
    \noindent
    \textsc{Marek Izydorek}\\
    Faculty of Applied Physics and Mathematics\\
    Gda\'{n}sk University of Technology\\
    Narutowicza 11/12, 80-233 Gda\'{n}sk, Poland\\
    izydorek@mif.pg.gda.pl
    
    \vspace{1cm}
    
    \noindent
    \textsc{Joanna Janczewska}\\
    Faculty of Applied Physics and Mathematics\\
    Gda\'{n}sk University of Technology\\
    Narutowicza 11/12, 80-233 Gda\'{n}sk, Poland\\
    janczewska@mif.pg.gda.pl
    
    \vspace{1cm}
    
    \newpage
    \noindent
    \textsc{Nils Waterstraat}\\
    School of Mathematics, Statistics and Actuarial Science\\
    University of Kent\\
    Canterbury\\
    Kent CT2 7NF\\
    UNITED KINGDOM\\
    N.Waterstraat@kent.ac.uk
    
    \vspace{1cm}
    
    \noindent
    \textsc{Anita Zgorzelska}\\
    Faculty of Applied Physics and Mathematics\\
    Gda\'{n}sk University of Technology\\
    Narutowicza 11/12, 80-233 Gda\'{n}sk, Poland\\
    azgorzelska@mif.pg.gda.pl
      
  \end{document}